\documentclass[preprint,12pt,authoryear]{elsarticle}

\usepackage{graphicx}
\usepackage{hhline}
\usepackage{amsmath,amsthm,amsfonts,amscd,amssymb,mathrsfs}
\usepackage{xspace}
\usepackage{hyperref}
\usepackage{booktabs} 
\usepackage{nicematrix}
\usepackage{physics}

\usepackage{array,multirow}   
\newcolumntype{C}{>{$}c<{$}} 
\newcolumntype{L}{>{$}l<{$}} 
\newcolumntype{R}{>{$}r<{$}} 

\usepackage{verbatim}
\usepackage{amscd}   
\usepackage[all]{xy} 

\usepackage{nicefrac}
\usepackage{xfrac}

\usepackage{mathdots}
\usepackage{tikz}

\usepackage[T1]{fontenc} 
\usepackage{cleveref} 
\numberwithin{equation}{section} 
\usepackage{color}

\newcommand{\CC}{\mathbb{C}}
\newcommand{\RR}{\mathbb{R}}

\newcommand{\QQ}{\mathbb{Q}}

\newcommand{\ccirc}[1]{\xymatrix@1{ +<1ex>[o][F-]{#1}}}

\usepackage{thmtools}
\declaretheorem[style=plain, numberwithin=section]{theorem}
\declaretheorem[style=definition,name=Definition,qed=$\blacksquare$, numberwithin=section, sibling=theorem]{definition}
\declaretheorem[style=definition,name=Example,qed={$ \diamondsuit$}, numberwithin=section, sibling=theorem]{example}
\declaretheorem[style=definition,name=Remark,qed=$\blacksquare$, numberwithin=section, sibling=theorem]{remark}

\journal{Mechanism and Machine Theory }

\begin{document}

\begin{frontmatter}

\title{Equilibria for Networks of Linear Translational Springs}

\author[AU]{Luke Oeding}\ead{oeding@auburn.edu}
\author[AU]{Ethan Clayton}\ead{ewc0025@auburn.edu}
\author[AU]{Jackson Elsea}\ead{jwe0023@auburn.edu}
\affiliation[AU]{organization={Auburn University},
            addressline={Department of Mathematics and Statistics},
            city={Auburn},
            state={AL},
            country={USA}
            }
\author[NYU]{Nicholas Wang}\ead{nyw2006@nyu.edu}
\affiliation[NYU]{organization = {New York University},
                  addressline = {Courant Institute of Mathematical Sciences},
                  city={New York},
                  state = {NY},
                country = {USA}}

\author[AFRL]{Adam Rutkowski}\ead{adam.rutkowski@us.af.mil}
\affiliation[AFRL]{organization = {Air Force Research Laboratory},
city = {Eglin Air Force Base},
state = {Florida}, 
Country = {USA}
}

\begin{abstract}
We use tools from nonlinear algebra to study the equilibria of small linear translational spring networks. Specifically we use the techniques of homotopy continuation, monodromy, and parameter homotopy (a.k.a. cheater homotopy) to solve all rigid linear translational spring networks up to $5$ nodes in both $2$ and $3$ dimensions. We describe a method of implementing parameter homotopy that arises naturally from the physical structure of the system. We give precise total degree bounds on the maximum number of solutions for general planar spring networks. We discuss further efficiency gains obtained from polyhedral homotopy methods. We compare the computation efficiency of these techniques against a baseline of Newton's method. 
\end{abstract}

\begin{keyword}
Spring Networks, Polynomial Systems, Homotopy Continuation, Monodromy, Newton's Method, Equilibria

\end{keyword}

\end{frontmatter}

\section{Introduction}
A linear translational spring network is a graph (typically embedded in $\mathbb{R}^2$, or $\mathbb{R}^3$) whose vertices have some position in space, and whose edges are linear translational springs with a known spring constant and resting length. We use the term "translational" to refer to a spring that exerts a force that depends on the length of the spring compared to its resting length, and "linear" to indicate that the force is proportional to the change in length (i.e. that it obeys Hooke's law). Other types of springs, for instance torsional (i.e. rotational), could be considered, and these springs could be linear in the sense that they exert a torque that is proportional to the an angular displacement, but for this article we only consider translational springs. Spring networks can be useful models in a variety of scientific domains. 

The equilibria, that is configurations of nodes and springs which yield no net force, of certain spring networks have been studied before. \cite{pigoski1995inverse} used an inverse force analysis to study the statics of a spring system with $2$ springs and $3$ nodes. This method uses algebraic elimination techniques to reduce the problem to a single degree $6$ polynomial whose roots give the equilibria of the network. A similar inverse force analysis was used to solve a $3$ spring $4$ node spring network in \cite{zhang1997reverse,ZhangLiao}, which gave a degree $22$ polynomial describing the equilibria positions. 

Solving a system via elimination theory is the polynomial analogue to Gaussian elimination. The original system is transformed into a triangular system of equations that can be solved via back substitution. In the polynomial case, the last polynomial depends on only one variable, and can be solved numerically, for instance via eigenvalue methods. For each root of the last polynomial one substitutes that root to obtain a new triangular system of equations with one fewer variable and one fewer equation. Then one repeats this process until all roots are found.

Using elimination theory has the advantage of giving solutions in terms of the roots of a polynomial whose coefficients are given by algebraic expressions in terms of known values. Therefore, the roots can be quickly computed to high degrees of precision without needing significant computation power. Furthermore it gives an upper bound on the number of equilibria of the network, namely the degree of the polynomial. However, when applying this method to larger spring networks the degree of the polynomial that describes the equilibria grows large. This makes computing solutions more difficult. Additionally the elimination techniques used in these methods involve dividing by expressions. This can cause the method to miss `singular' solutions where these expressions are $0$. 

We utilize methods from numerical algebraic geometry to address these issues and to allow for larger and more complex spring networks to be solved. The primary technique that we use is homotopy continuation. Homotopy continuation has been used to solve systems of polynomials arising in a variety of physical domains. For some examples in robotics see \cite{tchon2007robotics}, \cite{moradi2012robotics}, and \cite{luo2022forwardstatics}. Additionally it has been used to solve the inverse position problem for a $6$ arm linkage in \cite{Wampler6R}. 

Homotopy continuation has also been used to solve systems arising from spring networks. For example \cite{su2006planarcompliant,su2028planarcompliant} use homotopy continuation to solve for the statics of a planar compliant mechanism consisting of $3$ translational and $3$ torsional springs. 

There are several programs that perform homotopy continuation including Bertini, \cite{Bertini}, the \texttt{NumericalAlgebraicGeometry} package in Macaulay2, \cite{leykin2011numerical,M2}, and the Julia library \texttt{HomotopyContinuation.jl}, \cite{hc.jl}, which we used primarily for this work. For brevity, we refer to this library as \texttt{hc.jl}.

We create a system of polynomial equations whose solutions are the equilibria of the network. We review our various methods for solving these systems in \Cref{sec:methods}. We demonstrate the effectiveness of several methods: homotopy continuation (\Cref{sec:HC}), monodromy (\Cref{sec:monodromy}), and parameter homotopy (\Cref{sec:ph}). We show that these methods can be significantly more computationally efficient than a classical application of Newton's method to the polynomial system (see \Cref{sec:newton}). While certainly there are more sophisticated solvers, we chose to compare to Newton's method in order to have a common baseline for comparison and to highlight the difference between a solver that finds one solution at a time (Newton-based methods) and solvers that attempt to find all solutions at once (homotopy methods). 

We identify the Bézout bounds for the number of solutions on two different formulations of the system of equations for spring networks (see \Cref{thm:Bézout}) and find the optimal choices of base points in \Cref{thm:bound}. In \Cref{sec:polyhedral} we discuss the advantages of utilizing polyhedral start systems for homotopy continuation solving. In particular we find that the bounds for the number of solutions given by polyhedral methods can be several orders of magnitude smaller than the Bézout bound on the number of solutions for the same system. In \Cref{sec:2d} we report on our computations for all small 2- and 3-dimensional rigid linear translational spring networks with up to 5 nodes. In \Cref{sec:big} we discuss strategies for dealing with larger networks.

\section{Methods}\label{sec:methods}
Our main strategy is to transform the problem of finding the equilibria of a spring network into finding the zeros of a system of polynomial equations. We give two distinct constructions for a system of polynomials in \Cref{sec:Standard Formulation} and \Cref{sec:Inverse Formulation}. We then outline a range of different methods for solving these systems of equations and evaluate their effectiveness on spring networks of different sizes.

\subsection{From Spring Networks to Systems of Equations}\label{sec:Standard Formulation}
In this section we discuss the systems of equations that govern linear spring network equilibria.

\begin{definition}
    A translational spring network is a pair $N = (P, S)$ with set of nodes (points) $P = \{ p_1, p_2, \ldots, p_n \}$ and a collection of linear springs $S \subseteq \{ s_{i,j}: 1 \leq i, j \leq n, i < j \}$ (with $s_{i, j}$ connecting points $p_i$ and $p_j$). Each spring $s_{i,j} \in S$ is given by a pair $s_{i,j} = (k_{i,j}, \tilde{\ell}_{i,j})$ where $k_{i,j}$ is the spring constant and $\tilde{\ell}_{i,j}$ is the resting length of $s_{i,j}$.

    An embedding of a network $N=(P,S)$ into $\RR^d$ is a function $f: P \to \mathbb{R}^d$ that assigns coordinates to the points and, in turn, determines the stretched lengths $\ell_{i,j}=|f(p_i)-f(p_j)|$.  
    We call such an assignment an \emph{equilibrium position} if for each $1 \leq i \leq n$ the net force on each point $p_i \in P$ from all of the springs incident to $p_i$ is $0$.
\end{definition}

Our goal is to compute all of the equilibrium positions for a given spring network. A spring network is called rigid if it has finitely many equilibrium positions up to rotations and translations. Given a spring network $N = (P, S)$, we construct a system of polynomial equations in multiple variables, whose common roots are the equilibrium positions of $N$. For simplicity we will first make the assumption that $N$ is a spring network in dimension $2$ and then demonstrate how our methods can be extended to higher dimensions.  

The position of each node $p_i$ is unknown. Let $p_j = (x_j, y_j)$ for variables $x_1, \ldots, x_n$ and $y_1, \ldots, y_n$. Similarly the actual lengths of each spring $s_{i,j}$ is an unknown quantity. Let $\ell_{i,j}$ be a variable representing the actual length of $s_{i, j}$, $\tilde{\ell}_{i,j}$ be a variable representing the resting length of $s_{i,j}$, and $k_{i,j}$ be a variable representing the spring constant of $s_{i,j}$. 

Thus, the set of variables is $\{x_1, \ldots, x_n, y_1, \ldots, y_n \} \cup \{ \ell_{i, j} | s_{i, j} \in S \}$. The set of all spring constants and resting lengths are known values and so $\{ k_{i,j} | s_{i,j} \in S \} \cup \{ \tilde{\ell}_{i,j} | s_{i,j} \in S \}$ is the set of parameters. 

We now construct a system of equations in terms of the variables and parameters from the physical constraints of the spring network. Thus, given some fixed parameter values we can solve the system of equations in terms of the variables and obtain the equilibria of the spring network. 

The first type of equations are geometric constraints. If a spring $s_{i,j}$ exists between points $p_i$ and $p_j$ then the length of the spring must be equal to the distance between $p_i$ and $p_j$. Algebraically this gives an equation of the form 
\begin{equation}\label{eq:standardgeom}
    (x_j - x_i)^2 + (y_j - y_i)^2 = (\ell_{i,j})^2,
\end{equation}
where we have squared the length to make the condition a polynomial, which also allows for the possibility of negative spring lengths.

The other type of equation comes from balancing forces at nodes. Using Hooke's Law we know that the force that spring $s_{i,j}$ exerts on node $p_i$ in the direction of node $p_j$ is
\[
f_{i,j} = \begin{pmatrix} (f_{i,j})_x \\(f_{i,j})_y \end{pmatrix} = 
k_{i,j} (\ell_{i,j} - \tilde{\ell}_{i,j}) \frac{1}{(\ell_{i,j})}\begin{pmatrix}   x_j - x_i \\ y_j - y_i\end{pmatrix},
\]
with the $x$- and $y$-components of the force denoted $(f_{i,j})_x $ and $(f_{i,j})_y $.
 The net force on node $p_i$ is the sum of the forces generated by each spring attached to nodes adjacent to $p_i$. Thus, the $x$- and $y$-components of the force are
\begin{equation}
\begin{matrix}
    (F_i)_x = \sum_{s_{i,j} \in S} (f_{i,j})_x = \sum_{s_{i,j} \in S}  k_{i,j} (\ell_{i,j} - \tilde{\ell}_{i,j})   \frac{(x_j - x_i)}{(\ell_{i,j})},
\\
    (F_i)_y  = \sum_{s_{i,j} \in S} (f_{i,j})_y = \sum_{s_{i,j} \in S}  k_{i,j} (\ell_{i,j} - \tilde{\ell}_{i,j})   \frac{(y_j - y_i)}{(\ell_{i,j})}.
    \end{matrix}
\end{equation}

These equations are not polynomial in our variables. However, we can rectify this by simple algebraic manipulation. Since the spring network is in equilibria, the net force on each node, in the $x$ and $y$ direction must be $0$. Thus, 
\[\sum_{s_{i,j} \in S}  k_{i,j} (\ell_{i,j} - \tilde{\ell}_{i,j}) \frac{(x_j - x_i)}{(\ell_{i,j})} = 0\quad \text{and} \quad \sum_{s_{i,j} \in S}  k_{i,j} (\ell_{i,j} - \tilde{\ell}_{i,j})   \frac{(y_j - y_i)}{(\ell_{i,j})}=0.
\]
We can now multiply both equations by $\prod_{s_{i,j} \in S} \ell_{i,j}$ to remove the denominators. This gives the following equations for each node $p_i$ in the spring network:
\begin{equation}
\begin{matrix}\label{eq:standardforce}\displaystyle
    \sum_{s_{i,j} \in S}  k_{i,j} (\ell_{i,j} - \tilde{\ell}_{i,j}) (x_j - x_i) \prod_{s_{i,k} \in S, j \neq k} \ell_{i,k} = 0,
\\ \displaystyle
\sum_{s_{i,j} \in S}  k_{i,j} (\ell_{i,j} - \tilde{\ell}_{i,j}) (y_j - y_i) \prod_{s_{i,k} \in S, j \neq k} \ell_{i,k} = 0.
\end{matrix}
\end{equation}
These equations are polynomial in our desired variables.

We fix a coordinate frame by specifying coordinates of nodes.
In the case where $d = 2$ we can do this by asserting that $p_1 = (0, 0)$ and that $p_2 = (x_2, 0)$ for some $x_2 \in \mathbb{R}$. This allows us to ignore the net force equations for $p_1$. Similarly since $p_2$ is fixed on the $x$-axis at some position $(x_2, 0)$ we may ignore the net force equation in the $y$-direction for $p_2$. As a result we can remove  $x_1, y_1$ and $y_2$ from our set of variables and delete any equations that contain those variables.

\subsection{Using Inverse Lengths}
\label{sec:Inverse Formulation}

An alternate method of constructing a system of equations that represents the spring network (as long as the springs have non-zero length) is using inverse spring lengths. Let $I_{i,j} = \frac{1}{\ell_{i,j}}$ and construct a system with variables $\{x_2, \ldots, x_n, y_3, \ldots, y_n \} \cup \{ I_{i, j} | s_{i, j} \in S \}$ and parameters $\{ k_{i,j} | s_{i,j} \in S \} \cup \{ \tilde{\ell}_{i,j} | s_{i,j} \in S \}$.

For any spring $s_{i,j}$ between points $p_i$ and $p_j$ we can rewrite the equations that arise from the geometric constraints as
\begin{equation}\label{eq:invgeom}
    ((x_j - x_i)^2 + (y_j - y_i)^2) (I_{i,j})^2 = 1.
\end{equation}

We can also rewrite the force balancing equations in terms of our new variables. By Hooke's Law the magnitude of force that spring $s_{i,j}$ exerts on node $i$ is given by $F = k_{i,j} (\ell_{i,j} - \tilde{\ell}_{i,j})$. Using the relation $I = \frac{1}{\ell}$ we have the vector valued force equation
\begin{equation}\label{eq:invforce}
    F = \sum_{s_{i,j} \in S} k_{i,j} (\ell_{i,j} - \tilde{\ell}_{i,j})   \frac{1}{\ell_{i,j}} \begin{pmatrix} x_j - x_i \\ y_j - y_i\end{pmatrix} 
    = \sum_{s_{i,j} \in S} k_{i,j} (1 - \tilde{\ell}_{i,j} I_{i,j})\begin{pmatrix} x_j - x_i \\ y_j - y_i\end{pmatrix}.
\end{equation}

This equation is already polynomial in the variables and parameters we chose. The net force equations for each node in the $x$- and $y$-directions are given by sums of these equations and are therefore also polynomial. Thus, we have a system of polynomial equations whose solution set defines the equilibria of the spring network. 

Similar to the previous setup, any equilibrium remains an equilibrium under translations and rotations in space. Therefore, if we assert that $p_1 = (0, 0)$ and that $p_2 = (x_2, 0)$ for some $x_2 \in \mathbb{R}$, we may again ignore the net force equations on $p_1$ and the net force equation in the $y$-direction for $p_2$. Additionally we can remove $x_1, y_1$ and $y_2$ from our set of variables.

Using inverse spring lengths as variables instead of spring lengths reduces the degrees of the polynomials in the system of equations. This means that it is faster to compute solutions with both homotopy continuation and monodromy which we show theoretically in \Cref{sec:Comparing Constructions} and experimentally in \Cref{sec:HC}. However, if any solution has one of the spring lengths set to $0$ then this solution will not be a solution of the system formulated with inverse spring lengths since the corresponding inverse-length will be undefined. As a result this method removes any singular solution where the length of a spring is $0$. In both of these constructions we have reduced the problem to finding the solutions to a set of polynomial equations. We can now use a variety of existing numerical techniques for solving polynomials systems in order to find the solutions.

\begin{example}[A $2$-dimensional $3$-node spring network]
Consider a spring network $N = (P, S)$ with $P = (p_1, p_2, p_3)$ (with $p_j = (x_j, y_j)$) and $S = \{ s_{1,2}, s_{1,3}, s_{1,2} \}$, depicted at \Cref{fig:K3network}. 
The formulation of the conditions for equilibria in terms of lengths produces the following system of equations
\renewcommand{\arraystretch}{1.25}
\begin{equation}\label{eq:3node1}
\begin{array}{l}
  (x_2)^2-(\ell_{1,2})^2 = 0, \\
  (x_3)^2 + (y_3)^2- (\ell_{1,3})^2 = 0,\\
  (x_3-x_2)^2 + (y_3)^2 - (\ell_{1,3})^2 = 0,\\
  x_2 k_{1,2} (\ell_{1,2} - \tilde{\ell}_{1,2}) \ell_{2,3} + (x_2 - x_3) k_{2,3} (\ell_{2,3} - \tilde{\ell}_{2,3}) \ell_{1,2} = 0, \\
  x_3 k_{1,3} (\ell_{1,3} - \tilde{\ell}_{1,3}) \ell_{2,3} + (x_3 - x_2) k_{2,3} (\ell_{2,3} - \tilde{\ell}_{2,3}) \ell_{1,3} = 0,  \\
  y_3 k_{1,3} (\ell_{1,3} - \tilde{\ell}_{1,3}) \ell_{2,3} + y_3 k_{2,3} (\ell_{2,3} - \tilde{\ell}_{2,3}) \ell_{1,3} = 0.  
  \end{array}
\end{equation} 
On the other hand, the formulation in terms of inverse-lengths gives the following system of equations
\begin{equation}\label{eq:3nodeInv}
\begin{array}{l}
  (x_2)^2 (I_{1,2})^2 - 1= 0, \\
  ((x_3)^2 + (y_3)^2) (I_{1,3})^2 - 1 = 0, \\
  ((x_3-x_2)^2 + (y_3)^2) (I_{1,3})^2 - 1 = 0, \\
  x_2 k_{1,2} (1 - I_{1,2} \tilde{\ell}_{1,2}) + (x_2 - x_3) k_{2,3}(1 - I_{2,3} \tilde{\ell}_{2,3}) = 0,  \\
  x_3 k_{1,3} (1 - I_{1,3} \tilde{\ell}_{1,3}) + (x_3 - x_2) k_{2,3} (1 - I_{2,3} \tilde{\ell}_{2,3}) = 0,  \\
  y_3 k_{1,3} (1-I_{1,3} \tilde{\ell}_{1,3}) + y_3 k_{2,3} (1 -I_{2,3} \tilde{\ell}_{2,3}) = 0. 
   \qedhere
  \end{array}
\end{equation} 
\end{example}

\subsection{Comparing Constructions}
\label{sec:Comparing Constructions}
A reference for this section is \cite[Chapter~8.7]{CLO_text}.
Given a system of $n$ homogeneous polynomial equations in $n+1$ variables with complex coefficients, Bézout's Theorem implies that the number of common solutions in projective space, counted with multiplicity, is either infinite (with probability zero if the equations are general) or equal to the product of the degrees of the polynomials. Thus,  in the finite dimensional case the number of distinct solutions is bounded above by the product of the degrees. 

We can use this to obtain a bound on the number of solutions to our systems. For both the standard and inverse-length constructions, the number of polynomials and number of variables are equal. However, the polynomials are inhomogeneous. Thus, the system of equations is given $n$ inhomogeneous polynomials in $n$ variables. We can imagine that each is a polynomial in $n + 1$ variables and homogenize each polynomial with respect to the $n + 1$'st variable. Homogenizing does not change the degree of any of the polynomials, so we obtain a system of $n$ homogeneous polynomial equations in $n+1$ variables. Thus, the Bézout bound for the number of solutions is the product of the degrees of the polynomials. 

The best known algorithms for computing the common roots of the system of polynomials, given in \cite{Faugere2013PSSFLA}, are polynomial in the Bézout bound. Thus, we can asymptotically bound the runtime for computing equilibria for both the standard system described in \ref{sec:Standard Formulation} and the inverse-lengths system described in \ref{sec:Inverse Formulation}. We compute the Bézout bound for each in the case of dimension $2$.

\begin{theorem}\label{thm:Bézout}
Let $N = (P, S)$ be a rigid spring system in dimension $2$. Assume that $N$ has $n$ nodes and $s$ linear springs. 
The Bézout bounds $B_f$ of the standard system and $B_i$ of the inverse-length system are respectively given by 
\begin{equation}\label{eq:bf_bound}
    B_f =  2^s \prod_{j = 1}^n \left(1 + d(p_j)\right)^2 \leq 2^s \left(1 + \frac{2s}{n}\right)^{2n},
\end{equation}
and
\begin{equation}\label{eq:bi_bound}
    B_i = 4^s 4^n = 2^{2s+2n}.
\end{equation}
\end{theorem}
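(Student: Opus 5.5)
We compute the Bézout bound directly as the product of the total degrees of the polynomials in each formulation, via the homogenization argument recalled above. In each system there are two kinds of equations: one geometric constraint per spring, and two force-balance equations per node. We therefore compute the degrees of the two kinds separately and multiply. Throughout, $d(p_j)$ denotes the number of springs incident to $p_j$, so that $\sum_j d(p_j) = 2s$.

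\emph{Standard system.} Each geometric equation \eqref{eq:standardgeom} has degree $2$, which contributes the factor $2^s$. For node $p_i$, a typical term of \eqref{eq:standardforce} is
\[
k_{i,j}(\ell_{i,j}-\tilde{\ell}_{i,j})(x_j-x_i)\prod_{k\neq j}\ell_{i,k}.
\]
It is a product of one linear factor in $\ell_{i,j}$, one linear factor in the coordinates, and $d(p_i)-1$ further lengths. Its degree is therefore $1 + 1 + (d(p_i)-1) = 1 + d(p_i)$. The same holds for the $y$-equation, so node $p_i$ contributes $(1+d(p_i))^2$, and we obtain the product formula for $B_f$.

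Two bookkeeping points need care. First, we should check that the top-degree parts do not cancel when the terms are summed. Generically they do not, because the leading monomials $\ell_{i,j}\prod_{k\neq j}\ell_{i,k}\, x_j$ involve distinct coordinate variables $x_j$. Second, there is the coordinate-frame normalization, which deletes equations for $p_1$ and the $y$-equation for $p_2$. I will interpret the statement as counting $n$ nodes' worth of force equations, with the geometric and force counts balancing the variable count. If the deletions are made literally, the product only decreases, so the stated formula remains an upper bound. I expect this reconciliation of exactly which equations are kept to be the main point needing care, and I would state it explicitly as a convention.

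For the inequality, apply AM–GM to the $n$ positive numbers $1+d(p_j)$:
\[
\prod_{j=1}^n (1+d(p_j)) \le \left(\frac{1}{n}\sum_{j=1}^n (1+d(p_j))\right)^n = \left(1+\frac{2s}{n}\right)^n .
\]
Squaring this and multiplying by $2^s$ gives the bound in \eqref{eq:bf_bound}.

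\emph{Inverse-length system.} Each equation \eqref{eq:invgeom} is a degree-$2$ form in the coordinates times $I_{i,j}^2$, so it has degree $4$, which contributes $4^s$. Each force equation from \eqref{eq:invforce} is a sum of terms $k_{i,j}(1-\tilde{\ell}_{i,j}I_{i,j})(x_j-x_i)$. These have degree $2$, independent of the valence of the node. Two equations per node then contribute $2\cdot 2 = 4$ per node, giving $4^n$. Hence $B_i = 4^s4^n = 2^{2s+2n}$, under the same equation-counting convention as for $B_f$. Nondegeneracy of the leading terms is immediate here, since the monomials $I_{i,j}x_j$ are distinct for different $j$.
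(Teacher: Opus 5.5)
Your proposal is correct and follows the same route as the paper. You count degrees per equation type ($2$ and $1+d(p_i)$ for the standard system, $4$ and $2$ for the inverse-length system), multiply them, and apply AM--GM to $\prod_j(1+d(p_j))$ using $\sum_j d(p_j)=2s$. Your convention of counting all $2n$ force equations matches the paper's, which handles the three deleted equations separately in \Cref{thm:bound}, and your check that the top-degree parts do not cancel is rigor the paper leaves implicit.
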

\begin{proof}
In the formulation of the standard system there are two types of polynomials: ones that derive from geometric constraints and ones that derive from force balancing equations. For each spring $s_{i,j} \in S$ we obtain an equation given by geometric constraints derived in \Cref{eq:standardgeom}. This equation has degree $2$. Similarly for every node $p_i \in P$, we obtain two force balancing equations derived in \Cref{eq:standardforce}. Each of these equations has degree equal to $1 + d(p_i)$, where $d(p_i)$ is the number of springs adjacent to node $i$. Therefore, the Bézout bound $B_f$ of the standard system is given by 
\[
B_f = \prod_{i = 1}^{s} 2 *\prod_{j = 1}^n (1 + d(p_j))^2 = 2^s \left[\prod_{j = 1}^n (1 + d(p_j))\right]^2.
\]

Since $N = (P, S)$ is rigid we know that $d(p_j) \geq 2$. Thus
\[
    B_f \geq 2^s \prod_{j = 1}^n (3)^2 = 2^s9^n.
\]
Using the AM-GM inequality we can make the following bound:
\[
\prod_{j = 1}^n (1 + d(p_j)) \leq \left( \frac{\sum_{j = 1}^n (1 + d(p_j))}{n} \right)^n = \left(1 + \frac{2s}{n}\right)^n.  
\]
Therefore
\[
    B_f = 2^s \prod_{j = 1}^n (1 + d(p_j))^2 \leq 2^s \left(1 + \frac{2s}{n}\right)^{2n}.
\]

In the case where the underlying graph is a complete graph $d(p_j) = n - 1$ for all $j$ and 
\begin{equation}   B_f = 2^s \prod_{j = 1}^n (1 + (n - 1))^2 = 2^s \prod_{j = 1}^n n^2 = 2^sn^{2n}.
\end{equation}

In the formulation of the inverse-length system we again have polynomials derived from both geometric constraints and force balancing equations. For each spring $s_{i,j} \in S$ the equation given by geometric constraints is derived in \Cref{eq:invgeom} and has degree $4$. Similarly for every node $p_i \in P$, we derive two force balancing equations in \Cref{eq:invforce} which both have degree $2$. Therefore, the Bézout bound $B_i$ of the inverse-length system is given by
\[
    B_i = \prod_{i = 1}^{s} 4 *\prod_{j = 1}^n 2^2 = 4^s 4^n = 2^{2s+2n}.\qedhere
\]
\end{proof}
Comparing the two formulations and the bounds \Cref{eq:bf_bound,eq:bi_bound} we see that for the inverse-lengths system the number of solutions is exponential in both the number of nodes $n$ and number of springs $s$. On the other hand for the standard formulation we see that on average the number of solutions is super-exponential in $n$ and $s$. Only in the best case, that is when $N$ is minimally rigid, is the number of solutions exponential in $n$ and $s$. Therefore, we can expect that methods using the inverse-length system will have better runtime when compared with identical methods that use the standard formulation, especially for larger spring networks. We explicitly check this for homotopy continuation on a range of spring systems in \Cref{sec:HC}.

\begin{table}[ht]
    \centering
    \begin{tabular}{|c|c|c|c|c|}
        \hline
        Graph & \begin{tabular}{c} Standard Formulation \\  Bézout bound\end{tabular}  & \begin{tabular}{c} Inverse Lengths \\ Bézout bound \end{tabular} \\ 
        \hline 
        $K_3$ &  5\,832 & 4\,096  \\ 
        \hline
        $K_4$ & 4\,193\,304 & 1\,048\,576 \\ 
        \hline
        $K_4-e$ & 663\,552 & 262\,144 \\ 
        \hline
        $K_5$ & 10\,000\,000\,000 & 1\,073\,741\,824 \\ 
        \hline
        $K_5-e$ & 2\,048\,000\,000 & 268\,435\,456 \\ 
        \hline
        $K_5-e-e$ & 419\,430\,400 & 67\,108\,864  \\ 
        \hline
        $K_5-P_3$ & 368\,640\,000 & 67\,108\,864  \\ 
        \hline
        $K_5-(P_3\cup e)$ & 75\,497\,472 & 16\,777\,216 \\ 
        \hline
        $K_5-C_3$ & 58\,320\,000 & 16\,777\,216 \\ 
        \hline
        $K_5-P_4$ & 66\,355\,200 & 16\,777\,216  \\ 
        \hline
    \end{tabular}

    \medskip
    \caption{Comparison of Bézout bound of Small 2D Spring Networks for the Standard Formulation and Inverse Lengths Formulation.}\label{tab:Bezout}
\end{table}

\subsubsection{Choice of Base Points and Bounding the Number of Homotopy Paths}\label{sec:basePoints}

Recall that in the construction of both the standard system and the inverse-length system in order to obtain finitely many solutions we fix an orientation by taking a pair of base points $p_1 = (0, 0)$ and that $p_2 = (x_2, 0)$ for some $x_2 \in \mathbb{R}$. This allows us to remove the force equations of $p_1$ in the $x$ and $y$ directions and the force equation of $p_2$ in the $y$ direction and the variables $x_1, y_1, y_2$. In fact instead of $p_1, p_2$ we can choose any pair of points $p_i, p_j$, $1 \leq i,j \leq n$ $i \neq j$ to be our base points. Thus, it is natural to ask for a given network $N = (P, S)$ there an optimal choice of base points that minimizes a bound on the maximum number of solutions. A bound on the maximum number of solutions, in turn, can be used (and is used in \texttt{hc.jl}) to reduce the number of homotopy paths that are needed to track, which is the main bottleneck in homotopy continuation.
Our result is the following:
\begin{theorem}\label{thm:bound}
    The Bézout bound for the standard formulation, $B_f$, depends on the choice of base points $p_i$,$p_j$, and $B_f$ is minimized when $p_i$ is chosen to be the node with largest degree, and $p_j$ is chosen to be the node with largest degree among $P \setminus \{ p_i \}$. The Bézout bound for the inverse-lengths formulation $B_i$ is independent of base point.
\end{theorem}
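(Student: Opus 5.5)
We will write out the Bézout bound after fixing base points, exactly as in the proof of \Cref{thm:Bézout}, but now remembering which equations are deleted. Fixing $p_i=(0,0)$ and $p_j=(x_j,0)$ removes the variables $x_i,y_i,y_j$ and three equations: both force equations at $p_i$ and the $y$-force equation at $p_j$. The geometric equations are unaffected; their coordinates are simply specialized, and we assume (as for general parameters) that each equation keeps its nominal degree after substitution. The bound is then the product of the degrees of the surviving equations. So
\[
B_f(i,j) \;=\; \frac{2^s\prod_{m=1}^n (1+d(p_m))^2}{(1+d(p_i))^2\,(1+d(p_j))}.
\]
Similarly,
\[
B_i(i,j) \;=\; \frac{4^s 4^n}{2^2\cdot 2} \;=\; 2^{2s+2n-3}.
\]

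The second formula already gives the last claim. Every force equation in the inverse-length system has degree $2$ regardless of the node, so the three deleted factors contribute $2^3$ for any pair of base points. Hence $B_i$ is independent of the base points.

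For $B_f$, the numerator does not depend on $(i,j)$. Minimizing $B_f$ therefore amounts to maximizing the denominator
\[
g(i,j)=(1+d(p_i))^2(1+d(p_j))
\]
over ordered pairs $i\neq j$. This denominator genuinely varies with the pair whenever the degrees are not all equal, which justifies the claim that $B_f$ depends on the base points. Let $a=1+d(p_i)$ and $b=1+d(p_j)$, where $p_i$ is the node of largest degree and $p_j$ is the node of largest degree in $P\setminus\{p_i\}$.

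Now take any other ordered pair $(u,v)$ with values $c=1+d(p_u)$ and $e=1+d(p_v)$. Since $p_i$ has the largest degree, $c\le a$. Since $(u,v)$ are distinct nodes, at most one of them can be $p_i$, so $e\le b$ when $u=p_i$, and $e\le a$ otherwise. The comparison splits into two cases.
\begin{itemize}
\item If $\{u,v\}=\{i,j\}$ in reverse order, then $c^2e=b^2a\le a^2b$ because $b\le a$.
\item Otherwise $\{c,e\}$ is dominated componentwise, after sorting, by $\{a,b\}$. The larger of $c,e$ is at most $a$ and the smaller is at most $b$. Hence $c^2e\le \max(c,e)^2\min(c,e)\le a^2b$.
\end{itemize}
Here we used that, for positive numbers, $x^2y$ with $x\ge y$ is monotone in both arguments, and that swapping two values so the larger one is squared never decreases the product.

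The main point of care is this ordering argument: one must check that squaring the larger of the two chosen degrees is optimal, and that the two largest degrees dominate any other choice. The substitution/degree bookkeeping is routine. Ties, where several nodes share the maximal degree, give the same minimal value, so the theorem's choice is a minimizer.
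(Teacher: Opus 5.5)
Your proposal is correct and follows the paper's proof. You derive the same expressions $B_f = 2^s\prod_m(1+d(p_m))^2/\bigl((1+d(p_i))^2(1+d(p_j))\bigr)$ and $B_i = 2^{2s+2n-3}$, and your explicit check that the two largest degrees (with the larger one squared) maximize $(1+d(p_u))^2(1+d(p_v))$ over ordered pairs fills in the step the paper asserts in one line (``minimized when $d(p_i)$ and $d(p_j)$ are as large as possible'').
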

\begin{proof}
Let us first consider the standard system for a spring network $N = (P, S)$ with base points $p_i, p_j$. That is we declare that $p_i = (0, 0)$ and that $p_j = (x_2, 0)$. After removing the $3$ unnecessary equations the Bézout bound $B_f$ becomes 
\begin{equation}
    B_f = \prod_{i = 1}^{s} 2 * \frac{\prod_{k = 1}^n (1 + d(p_k))^2}{(1 + d(p_i))^2*(1 + d(p_j))} = 2^s \frac{\prod_{j = 1}^n (1 + d(p_k))^2}{(1 + d(p_i))^2*(1 + d(p_j))}.
\end{equation}

Therefore, $B_f$ is minimized when $d(p_i)$ and $d(p_j)$ are as large as possible. Explicitly choosing $p_i$ to have largest degree, and $p_j$ to have largest degree among $P \setminus \{ p_i \}$ minimizes the Bézout bound of the standard formulation. Therefore, we expect that for the standard formulation this choice of base points is optimal with respect to runtime.

Now consider the inverse-length system for a spring network $N = (P, S)$ with base points $p_i, p_j$.  Regardless of the choice of base points, after removing the $3$ unnecessary equations we obtain
\begin{equation}
    B_i = \prod_{i = 1}^{s} 4 *\frac{\prod_{j = 1}^n 2^2 }{2^2*2} 
    =2^{2s+2n - 3} 
\end{equation}
which is independent of the choice of base points.
\end{proof}

\subsubsection{Polyhedral bounds}\label{sec:polyhedral}
For polynomial systems sparsity is the concept when many fewer monomials occurring (with non-zero coefficient) than expected. Sparsity of a single polynomial can be quantified combinatorially by the Newton polytope, which is the convex hull of the exponent vectors of the monomials that occur. For a parameterized system of equations like those for spring networks we say a monomial occurs if the coefficient as a function of the parameters is not the zero function. The so-called BKK bound, \cite{Bernstein1975}, relates the number of solutions of a system with generic coefficients in the algebraic torus $(\CC^\star)^n$ to the mixed volume of the Newton polytope of the polynomials that define the system. The BKK bound for structured systems of equations can be much smaller than the Bézout bound.

It is well-known, that start systems for homotopy continuation can also be designed to match the system with respect to the mixed volume, see \cite{hauenstein2014newton} for a comprehensive overview. Moreover, \texttt{hc.jl} has this functionality built in, namely, before a homotopy continuation solving procedure begins the mixed volume is computed along with a corresponding polyhedral start system associated to the system of equations.

When a system of equations has a positive-dimensional solution set, homotopy continuation methods compute a so-called witness set, which is the solution set for the system obtained by augmenting it with codimension-many additional linear conditions. We have chosen an explicit set of linear conditions by choosing base points. This choice of base point fixes a coordinate frame, and it forces the system to have finitely many solutions. As demonstrated in \Cref{thm:bound}, the Bézout bound for the inverse-lengths system can depend on the choice of base point. In principle, we could also compute the mixed volume for each system of equations for each spring network we consider, and we could find the best choice of base point. However, this analysis would take us a bit far afield. Instead we just assume that the choice of base points that is best for the Bézout bound would also be good for the polyhedral bound. We report on the polyhedral bounds for the inverse-lengths system that \texttt{hc.jl} computes in \Cref{tab:PolyhedralSummary2D}, which supports our claim that the same choice of base points is optimal for both the Bézout and polyhedral bounds. For each spring network with $5$ or fewer nodes the choice of base point which is optimal for minimizing the Bézout bound also minimizes to polyhedral bound. In many cases the polyhedral bound is $2$ to $4$ times smaller for the optimal choice of base points when compared to the least optimal choice of base points. Additionally the polyhedral bounds for a spring network (given in \Cref{tab:PolyhedralSummary2D}) are several orders of magnitude smaller than the Bézout bound for the same spring network (given in \Cref{tab:Bezout}). 

\begin{table}[ht]
    \centering
\begin{NiceTabular}{|c|c|c|c|c|}
        \hline
        Graph & \multicolumn{4}{c}{\begin{tabular}{c} Base Point Degrees $(d(p_1), d(p_2))$ \\  Polyhedral Bound \end{tabular}} \\ 
        \hline 
        $K_3$ & \begin{tabular}{c} $(2,2)$ \\  $40$ \end{tabular}  \\ 
        \hline
        $K_4$ & \begin{tabular}{c} $(3,3)$ \\  $3\,904$ \end{tabular} \\ 
        \hline
        $K_4-e$ & \begin{tabular}{c} $(3,3)$ \\  $800$ \end{tabular} & \begin{tabular}{c} $(3,2)$ \\ $928$  \end{tabular} & \begin{tabular}{c} $(2,3)$ \\  $1\,280$ \end{tabular} & \begin{tabular}{c} $(2,2)$ \\  $1\,952$ \end{tabular} \\ 
        \hline
        $K_5$ & \begin{tabular}{c} $(4,4)$ \\  $1\,533\,952$ \end{tabular} \\ 
        \hline
        $K_5-e$ & \begin{tabular}{c} $(4,4)$ \\  $365\,568$ \end{tabular} & \begin{tabular}{c} $(4,3)$ \\   $414\,208$ \end{tabular} & \begin{tabular}{c} $(3,4)$ \\  $535\,040$ \end{tabular} & \begin{tabular}{c} $(3,3)$ \\   $766\,976$ \end{tabular} \\ 
        \hline
        $K_5-e-e$ & \begin{tabular}{c} $(4,3)$ \\  $97\,536$ \end{tabular} & \begin{tabular}{c} $(3,4)$ \\   $127\,488$ \end{tabular} & \begin{tabular}{c} $(3,3)$ \\  $144\,384$ \end{tabular}  \\ 
        \hline
        \multirow{2}{*}{$K_5-P_3$} &  \begin{tabular}{c} $(4,4)$ \\  $ 78\,080 $ \end{tabular} & \begin{tabular}{c} $(4,3)$ \\  $ 88\,832 $ \end{tabular} & \begin{tabular}{c} $(4,2)$ \\   $ 103\,168$ \end{tabular} & \begin{tabular}{c} $(3,4)$ \\  $ 120\,064$ \end{tabular} \\ \cline{2-5}  & \begin{tabular}{c} $(3,3)$ \\  $ 133\,888$ \end{tabular} & \begin{tabular}{c} $(3,2)$ \\  $ 207\,104$ \end{tabular} & \begin{tabular}{c} $(2,4)$ \\  $ 185\,600$ \end{tabular} & \begin{tabular}{c} $(2,3)$ \\  $ 267\,520$ \end{tabular}  \\ 
        \hline
        $K_5-(P_3\cup e)$ & \begin{tabular}{c} $(3,3)$ \\  $ 31\,488 $ \end{tabular} & \begin{tabular}{c} $(3,2)$ \\  $ 48\,768  $ \end{tabular} & \begin{tabular}{c} $(2,3)$ \\   $ 63\,744 $ \end{tabular} \\ 
        \hline
        $K_5-C_3$ & \begin{tabular}{c} $(4,4)$ \\  $16\,000$ \end{tabular} & \begin{tabular}{c} $(4,2)$ \\  $21\,120$ \end{tabular} & \begin{tabular}{c} $(2,4)$ \\   $39\,424$ \end{tabular} & \begin{tabular}{c} $(2,2)$ \\  $66\,944$ \end{tabular} \\ 
        \hline
        \multirow{2}{*}{$K_5-P_4$} & \begin{tabular}{c} $(4,3)$ \\  $18\,560  $ \end{tabular} & \begin{tabular}{c} $(4,2)$ \\  $ 21\,632 $ \end{tabular} & \begin{tabular}{c} $(3,4)$ \\   $ 25\,600 $ \end{tabular} & \begin{tabular}{c} $(3,3)$ \\  $ 29\,184 $ \end{tabular} \\ \cline{2-5} & \begin{tabular}{c} $(3,2)$ \\  $ 33\,280 $ \end{tabular} & \begin{tabular}{c} $(2,4)$ \\  $ 41\,600 $ \end{tabular} & \begin{tabular}{c} $(2,3)$ \\  $ 46\,336 $ \end{tabular} & \begin{tabular}{c} $(2,2)$ \\  $ 72\,192 $ \end{tabular}  \\ 
        \hline
    \end{NiceTabular}
    \medskip
    \caption{Comparison of Polyhedral Bounds for Choice of Base Points in the Inverse Lengths Formulation of Small 2D Spring Networks.} \label{tab:PolyhedralSummary2D}
\end{table}

\subsection{Newton's Method}\label{sec:newton}
One of the most standard methods for finding solutions to systems of polynomials is Newton's method. Newton's method is easy to implement, and when the initial guess is in a basin of attraction, Newton's method exhibits quadratic convergence, which makes it an attractive method.  A single instance of Newton's method, however, can only find at most a single solution and additionally may not converge at all. A standard approach is to take a lattice (or a random collection) of initial conditions and track where Newton's method sends each of them. However, as the spring networks become more complex, the space of initial conditions that needs to be checked becomes large since each additional variable increases the dimension of the sample space. This makes it difficult to sample the space of large spring networks with sufficient density. Furthermore, it appears that the basin of attraction around each solution becomes very small as the polynomial system grows. This makes it increasingly unlikely for each individual initial condition to converge via Newton's method. Thus, increasing numbers of trials is needed to find even one solution to larger systems using Newton's method. 

\begin{remark}The question of determining the characteristics of the grid (size, density, etc.) of initial conditions for Newton solving is at the heart of the famous P vs NP problem.  One prototypical NP hard problem is quadratic feasibility, that is determining if a system of quadratic equations has solutions over $\QQ, \RR$ or $\CC$. One might attempt to solve this with Newton's method with a grid of initial conditions and wait to see if any initial point converges to a solution. Even though each instance of Newton's method takes polynomial time, the efficiency of solving depends on the number of initial points that must be considered. The P vs  NP question is if a provably sufficient initial grid must have the number of initial grid points $N(n)$ exponential in $n$ the number of equations. Random choices of grid points can help in practice to keep $N(n)$ polynomially small for most systems, and de-randomization strategies are one approach to resolving P vs NP. For more about P vs NP related to linear and multilinear algebra, see \cite{HillarLim09mosttensor}.
\end{remark}

We assume our system of equations has solutions and that by taking enough random initial points we will eventually converge to a solution. We can heuristically quantify the concept ``eventually'' by computing the average time it takes Newton's method to find a single solution to each system and the number of initial points (trials) it takes to find one which converges to a solution. All computations reported in this section were performed using the Julia library \texttt{hc.jl} on Auburn University's College of Sciences and Mathematics (COSAM) server (Saturn), which has 96 cores.

\begin{table}[ht]
    \centering
    \scalebox{.95}{\begin{tabular}{|c|c|c|c|}
        \hline
        Graph &  Number of Trials &  \begin{tabular}{c} Newton's Method \\ Average Time \end{tabular}  & \begin{tabular}{c}Additional \\ Monodromy \\Time \end{tabular} \\
        \hline 
        $K_3$ &  \begin{tabular}{c} 4.42 \\33.56  \\521.24 \end{tabular} & \begin{tabular}{c} 104.8 $\mu$s \\ 225.4 $\mu$s \\ 2.29 ms \end{tabular} & 1.67s \\
        \hline
        $K_4$ & \begin{tabular}{c} 63.39 \\ 1\,649 \\ 47\,393 \end{tabular} & \begin{tabular}{c} 898.3 $\mu$s \\ 16.12 ms \\ 403.0 ms \end{tabular} & 11.1s \\
        \hline
        $K_4-e$ & \begin{tabular}{c} 19.78\\331.3\\10\,627 \end{tabular} & \begin{tabular}{c} 333.3 $\mu$s\\ 3.22 ms \\ 90.86 ms \end{tabular} & 8.02s \\
        \hline
        $K_5$ &  \begin{tabular}{c} 1\,842 \\ 70\,443 \\ 9\,174\,311 \end{tabular} & \begin{tabular}{c} 45.16 ms \\ 893.5 ms \\ 295.1 s \end{tabular} & n/a \\
        \hline
        $K_5-e$ & \begin{tabular}{c} 982.6 \\ 41\,452\\ 7\,142\,857 \end{tabular}& \begin{tabular}{c}19.80 ms \\ 889.5 ms \\ 97.2 s \end{tabular} &  26m 26s  \\
        \hline
        $K_5-e-e$ & \begin{tabular}{c} 25\,062 \\ 96\,153 \\ 21\,276\,595 \end{tabular} & \begin{tabular}{c} 462.7 ms \\ 2.62 s \\ 585.7 s \end{tabular} &  12m 56s  \\
        \hline
        $K_5-P_3$ & \begin{tabular}{c} 1392 \\ 10\,741 \\ 1\,369\,863 \end{tabular} & \begin{tabular}{c} 25.61 ms \\ 218 ms \\ 25.1 s \end{tabular} & 6m 29s \\
        \hline
        $K_5-(P_3\cup e)$ & \begin{tabular}{c} 248.1 \\ 15\,432 \\ 285\,714 \end{tabular} & \begin{tabular}{c} 4.41 ms \\ 284.2 ms \\ 2.89 s \end{tabular} & 1m 36s \\
        \hline
        $K_5-C_3$ & \begin{tabular}{c} 39.51 \\ 2\,567 \\ 104\,166 \end{tabular} & \begin{tabular}{c} 836.3 $\mu$s \\ 40.64 ms \\ 1.30 s \end{tabular} & 12.8s \\
        \hline
        $K_5-P_4$ & \begin{tabular}{c} 221.6 \\ 3\,828 \\ 88\,495 \end{tabular} & \begin{tabular}{c} 3.57 ms \\ 60.53 ms \\ 3.63 s \end{tabular} & 23.1s \\
        \hline
    \end{tabular}}

    \medskip
    \caption{Computational Results for Small 2D Spring Networks with Typical Parameter Values. For each graph we sample initial conditions in $3$ regions: $[0,1]^n$, $[-1,1]^n$ and $[-10, 10]^n$. We record the average number of trials until a successful convergence of Newton's method, the average time to obtain one solution, and the additional time needed to find all solutions with monodromy. Cells with n/a represents computations that were terminated after failing to conclude after 4 hours.}\label{tab:Newton}
\end{table}

One consideration when using Newton's method is that we do not a priori know in what domain our solutions will lie. In order to begin a Newton's method computation we must first choose a region of our input space to take a lattice in (or randomly select points from). By Rouche's Theorem we can expect that all solutions lie in some ball $B_{(r, 0)}$ centered at the origin, but the radius $r$ depends strongly on the choice of parameters and is difficult to compute for large systems. Furthermore we expect this to be a weak bound. As shown in the \Cref{tab:Newton} for each spring network, sampling a larger region significantly increases the time it takes to find a single solution using Newton's method as well as the average number of trials until Newton's method converges. This effect is especially pronounced for larger spring networks. In fact for spring networks $K_5-e$ and $K_5-e-e$ when sampling in the region $[-10, 10]^n$ it takes longer, on average, for Newton's method to find a single solution, than homotopy continuation to find all solutions to the system. 

Furthermore, Newton's method finds only one solution to the spring system. Thus, in order to find all solutions using Newton's method we have two options. Firstly, we could continually try Newton's method using random initial starting points until the number of distinct solutions found stabilizes. However, for large spring systems both the total number of solutions and the time it takes to find a single solution using Newton's method grow, so this becomes increasingly computationally inefficient. Furthermore, this process has no way to check when all solutions have been found. We anticipate that any method that finds one solution at a time will suffer similarly. 

Alternatively we could use the one solution given by Newton's method to begin a monodromy calculation, which we will describe in \Cref{sec:monodromy}. Using monodromy is significantly faster computationally and, while it still has no guarantee of finding all solutions, there are heuristic stopping criteria that give reasonable confidence that all solutions are found. We report the total time it takes a combined Newton and monodromy calculation to find all solutions in \Cref{tab:Newton}.

\subsection{Homotopy Continuation}\label{sec:HC}
The main technique we utilized for solving these systems of polynomials is homotopy continuation.

Given an unknown polynomial system $F(\overline{x})$, one first constructs a start system $G(\overline {x})$ with at least as many roots as $F(\overline{x})$ and whose roots are known. One popular choice for $G(\overline{x})$ is a system with roots being roots of unity for some high degree. We then consider the homotopy from $G(\overline{x})$ to $F(\overline{x})$, that is the function $H(\overline{x}, t) = (1 - t) G(\overline{x}) + t F(\overline{x})$. We track the path of each known solution of $G$ to a solution of $F$ as $t$ goes from $0$ to $1$.

The homotopy continuation method is effective in solving modest sized spring networks. Homotopy continuation solvers must first create the start system $G(\overline{x})$. The degree of the start system depends on the total degree of the system that we want to solve. This increases as more nodes and springs are added to the spring network. Start systems with large degree require significant computations. The initialization of the start system is only required the first time solving a new spring network, but it does introduce additional computations. We therefore examine alternate methods for solving large spring systems that reduce computation time in \Cref{sec:big}.

\subsection{Monodromy}\label{sec:monodromy}
Monodromy has been shown in \cite{BASKAR2024105609} to be an effective method for dealing with systems of equations that involve non-algebraic functions, which, in particular, make it unclear what start system should be chosen for homotopy methods. In addition, these methods can be useful when the systems of equations have a large difference between the number of paths tracked and the number of non-singular solutions found. This approach was also described in \cite{TARI2012177}.

Monodromy has a range of benefits and limitations when compared with other methods for solving our systems of equations. It is often useful as a supplementary tool to homotopy continuation. Firstly monodromy requires an initial solution of the system to get started. This means that we must first use a different strategy such as Newton's method, homotopy continuation or parameter/cheater homotopy to find an initial solution to our system of equations that we can use in monodromy.

\subsubsection{Heuristic Stopping Criteria for Finding all Solutions}
Given an initial solution a monodromy loop will take that solution to another solution, and initially it is likely that the new solution is distinct from the previous one. In subsequent loops each solution in the current solution set is sent to another solution potentially doubling the total number of solutions found if there are no repeated solutions. In practice we observe that initially the number of solutions approximately doubles for each monodromy loop until it starts approaching the full solution set, in which case the monodromy loop will mostly take the current solution set to itself, finding fewer new solutions at each loop. This means that if monodromy is initialized with a few solutions to the system of equations it can rapidly find a large percentage of total number of solutions to the system. A heuristic stopping criterion is when several monodromy loops in a row find no new solutions.

For very large systems of equations arising from large spring networks numerical errors can cause homotopy continuation to miss a small number of solutions. In this case monodromy is useful in finding the missing solutions. Relatedly, monodromy is useful in verifying (heuristically) that another method has identified all of the solutions to a system of equations.  

\subsubsection{Missing Components}
One limitation of monodromy is that, if it initialized with a single solution or very few solutions, monodromy will, on occasion, miss the vast majority of solutions. This occurs when the incidence variety (the zero-set of the system of equations where all parameters are also treated as variables) is disconnected. Given an initial solution, monodromy can only find solutions that are on the same connected component of the incidence variety. In order to avoid this issue we can modify the system by introducing a set of `dummy' variables. This ensures that our system will have an incidence variety consisting of a single connected component, which allows us to consistently find all solutions using monodromy given any set of initial solutions.

\subsection{Parameter Homotopy and Building Up Networks}\label{sec:ph}

Computing the full solution set for a spring network using homotopy continuation becomes increasingly difficult and computationally expensive as the numbers of nodes and edges of the network grow. We can attempt to find solutions to larger networks from the solutions for smaller networks by adding in new springs (and possibly nodes) that are initially at rest, and hence do not introduce new solutions, and then performing a homotopy to move these solutions to the new system where those springs are not at rest.

Consider a spring network $N = (P, S)$. A \emph{subnetwork} of $N$ is a network $N' = (P', S')$ such that $P' \subseteq P$ and $S' \subseteq S$. Let $F(\overline{x},\overline{p})$ be the polynomial system that represents network $N$, where $\overline{x}$ are the variables of $N$ and $\overline{p}$ represent the parameters of $N$. Notice that we can describe the polynomial system of $N'$ simply by changing the parameter values $\overline{p}$ of $F$. Specifically for any $s_{i, j} \in S$, if $s_{i,j } \in S \setminus S'$ we set the parameter $k'_{i,j} = 0$ and if $s_{i,j} \in S'$ and we set the parameter $k'_{i,j} = k_{i,j}$. In both cases we leave ${\tilde{\ell}}'_{i,j} = {\tilde{\ell}}_{i,j}$. This gives a system identical to polynomial system representing $N'$ with the addition of dummy variables $I_{i,j}$ for $s_{i,j } \in S \setminus S'$ and $x_i, y_i$ for $p_i \in P \setminus P'$, and geometric equations that give their value in terms of the $x_i$'s and $y_i$'s. Let $\overline{q}$ be the parameters for the system that represents $N'$.

To conduct a parameter homotopy we first solve the polynomial system $F(\overline{x},\overline{q})$. This can be done using homotopy continuation or monodromy. We then take a homotopy from the system representing $N'$ to the system representing $N$ by changing the parameters. The homotopy is the function $H(\overline{x}, t) = F(\overline{x}, (1- t) \overline{q} + t \overline{p})$. We then track the path of each solution of $F(\overline{x},\overline{q})$ to a solution of $F(\overline{x},\overline{p})$ as $t$ goes from $0$ to $1$, in the same manner as homotopy continuation. 

Compared to $F(\overline{x},\overline{p})$, many of the parameters in $F(\overline{x},\overline{q})$ are $0$ which causes terms in various equations to disappear. This lowers the overall degree of the system meaning that the system $F(\overline{x},\overline{q})$ is faster to solve when compared to directly solving $F(\overline{x},\overline{p})$. Additionally, for large systems, the time it takes to track the paths through the parameter homotopy is negligible when compared to the time to compute $F(\overline{x},\overline{p})$. This makes parameter homotopy a significantly faster technique for solving large spring networks.

One drawback of this technique is that parameter homotopy will inevitably miss solutions to the larger system $F(\overline{x},\overline{p})$. Parameter homotopy tracks the path of each solution of the smaller system $F(\overline{x},\overline{q})$ and so it can at most produce a number of solutions equal to the number of solutions of $F(\overline{x},\overline{q})$. This is generally fewer than the total number of solutions of $F(\overline{x},\overline{p})$.

As a result we often use parameter homotopy in conjunction with monodromy since it is able to quickly find some initial solutions to the larger system that can be used as a basis for the monodromy loops. Since parameter homotopy often finds many solutions to the larger system which can help to accelerate the monodromy computation. 

\begin{example}\label{ex:Parameter Homotopy}

Consider two networks $N$ and $N'$, depicted respectfully in  \Cref{fig:K5network}. Network $N'$ is a subnetwork of $N$. 

Using homotopy continuation we can solve the network $N'$. Note that viewing $N'$ as a subnetwork of $N$ requires us to add dummy variables for the springs that are present in $N$ but not $N'$ and geometric equations that govern their length. This slightly increases the complexity of the computation time for $N'$ when compared to is computation depicted in \Cref{sec:HC}. Solving $N'$ involves tracking $731\,136$ paths, less than half of the paths required to directly solve the system $N$ directly using homotopy continuation. Furthermore the time of computation is only $13$:$47$, when compared with the $1$:$05$:$47$ it takes to solve $N$ directly. The computation for $N'$ finds $27\,246$ non-singular solutions, $3\,070$ of which are real.

We now take a parameter homotopy from $N'$ to $N$. This tracks each solution of $N'$ as it traces a path to a solution of $N$. The time it takes to track the paths of all solution is only $23$s. As some distinct solutions of $N'$ map to the same solution of $N$ and some paths diverge, this results in fewer total solutions than were computed for the network $N'$. In particular it results in $16\,334$ total solutions: $19$ singular and $16\,312$ non-singular. $964$ of those solutions are real, of which $11$ are singular and $953$ are non-singular.  

This method gives roughly a quarter of all solutions of network $N$, in a fraction of the time. The remaining solutions can be found using monodromy methods. Using monodromy it takes an additional $24$:$43$ to find the remaining solutions to the system. The total time to solve $N$ by this hybrid method is 
$38$:$53$. This is a 1.7x speedup compared to the $1$:$05$:$47$ needed for standard homotopy continuation.
\end{example}

\section{Results: Solving 2- and 3-Dimensional Spring Networks}\label{sec:2d}
A $2$-dimensional linear spring network is comprised a collection of
points in $\mathbb{R}^2$ connected by a network of linear
springs with known resting length and spring constants. 

We compute the equilibria for all rigid $2$-dimensional linear spring networks with 3 to 5 nodes for typical initial parameter values. We give the number of both singular and nonsingular solutions. Additionally, we construct an algorithm to compute equilibria for general $2$-dimensional linear spring systems with more than $5$ nodes in \Cref{sec:big}. All computations reported in this section were performed using \texttt{hc.jl} on Auburn University COSAM Saturn using 96 cores.

\Cref{tab:NetworkSummary2D} gives a summary of the results of our computations for the  parameter values provided in the examples in this section. We note that the numbers of paths and solutions depend on the initial parameters, however, the orders of magnitude of these quantities seem to be constant over a wide range of valid parameter values.
\begin{table}[ht]
    \centering\scalebox{.9}{
    \begin{tabular}{|c|r|r|r|r|c|c|c|c}
    \hline
        Graph & \#paths & 
        \begin{tabular}{c}
        \#solutions  \\ 
        complex  (real) \end{tabular} &
        \begin{tabular}{c}
        \# non-singular  \\ 
        complex (real) \end{tabular} & \begin{tabular}{c}
            time \\ (h:m:s)  
        \end{tabular}
        \\
        \hline 
        $K_3$ & \begin{tabular}{r} 228 \\ 40\end{tabular} &  
        \begin{tabular}{r} 31 (22) \\ 12 (12)\end{tabular}  &  
        \begin{tabular}{r} 12(12) \\ 12(12)\end{tabular} & 
        \begin{tabular}{l} 0:00:07.74 \\ 0:00:07.34\end{tabular} \\
        \hline
        $K_4$ &\begin{tabular}{r} 80\,136 \\ 3\,904 \end{tabular}& 
        \begin{tabular}{r} 4\,147 (440)\\ 424 (148)\end{tabular}  & 
        \begin{tabular}{r} 424 (148)\\ 424 (148)\end{tabular} & 
        \begin{tabular}{l} 0:01:55 \\ 0:00:13.1 \end{tabular} \\
        \hline
        $K_4-e$ &\begin{tabular}{r} 16\,416 \\ 1280 \end{tabular} & 
        \begin{tabular}{r} 928 (240)\\ 72 (72)\end{tabular}  & 
        \begin{tabular}{r} 72 (72)\\ 72 (72)\end{tabular} & 
        \begin{tabular}{l} 0:00:26.2 \\ 0:00:09.92\end{tabular}\\
        \hline
        $K_5$ &\begin{tabular}{r} n/a \\ 1\,533\,952 \end{tabular} & 
        \begin{tabular}{r} n/a \\ 53\,377 (4\,176) \end{tabular} &
        \begin{tabular}{r} n/a \\ 53\,338 (4\,176) \end{tabular} &
        \begin{tabular}{l} n/a \\ 1:05:47 \end{tabular} \\
        \hline
        $K_5-e$ &\begin{tabular}{r} n/a \\ 365\,568 \end{tabular} & 
        \begin{tabular}{r} n/a \\ 16\,632 (1\;846)  \end{tabular} &
        \begin{tabular}{r} n/a \\ 16\,607 (1\,846) \end{tabular} &
        \begin{tabular}{l} n/a \\ 0:11:51 \end{tabular} \\
        \hline
        $K_5-e-e$ &\begin{tabular}{r} n/a \\ 144\,384 \end{tabular} & 
        \begin{tabular}{r} n/a \\ 4\,556 (831) \end{tabular} &
        \begin{tabular}{r} n/a \\ 4\,556 (831) \end{tabular} &
        \begin{tabular}{l} n/a \\ 0:04:37 \end{tabular} \\
        \hline
        $K_5-P_3$ &\begin{tabular}{r} n/a \\ 133\,888 \end{tabular} & 
        \begin{tabular}{r} n/a \\ 2\,498 (892) \end{tabular} &
        \begin{tabular}{r} n/a \\ 2\,496 (892) \end{tabular} &
        \begin{tabular}{l} n/a \\ 0:03:56 \end{tabular} \\
        \hline
        $K_5-(P_3\cup e)$ &\begin{tabular}{r} 983\,140 \\ 31\,488 \end{tabular} &
        \begin{tabular}{r} 47\,109 (1\,924) \\ 695 (268) \end{tabular} &
        \begin{tabular}{r} 592 (248) \\ 592 (248) \end{tabular} &
        \begin{tabular}{l} 0:34:52 \\ 0:00:50 \end{tabular} \\
        \hline
        $K_5-C_3$ &\begin{tabular}{r} 756\,752 \\ 16\,000 \end{tabular} & 
        \begin{tabular}{r} 13\,680 (1\,927) \\ 412 (284) \end{tabular} &
        \begin{tabular}{r} 412 (284) \\ 412 (284) \end{tabular} &
        \begin{tabular}{l} 0:18:13 \\ 0:00:25 \end{tabular} \\
        \hline
        $K_5-P_4$  &\begin{tabular}{r} 1\,216\,694 \\ 29\,184 \end{tabular} &
        \begin{tabular}{r} 24\,434 (2\,377) \\ 397 (393) \end{tabular} &
        \begin{tabular}{r} 397 (393) \\ 397 (393) \end{tabular} &
        \begin{tabular}{l} 0:41:25 \\ 0:00:45 \end{tabular} \\
    \hline \end{tabular} }\medskip
    \caption{Computational Results for Small 2D Spring Networks with Typical Parameter Values. For each graph the results reported in the top line use the length formulation, the second line are for the inverse-length formulation. Rows with ``n/a'' are computations we did not attempt. }\label{tab:NetworkSummary2D}
\end{table}

\subsection{3-Node Spring Networks}

Consider a spring network $N$ with $3$ nodes and $3$ springs. That is $P = (p_1, p_2, p_3)$ and $S = (s_{1,2}, s_{1,3}, s_{2,3})$. This gives a system of $6$ polynomial equations with $6$ parameters $k_{1,2}, k_{1,3}, k_{2,3}, \tilde{\ell}_{1,2}, \tilde{\ell}_{1,3}, \tilde{\ell}_{2,3}$.

\begin{figure}[ht] \centering 
\includegraphics[scale=.4]{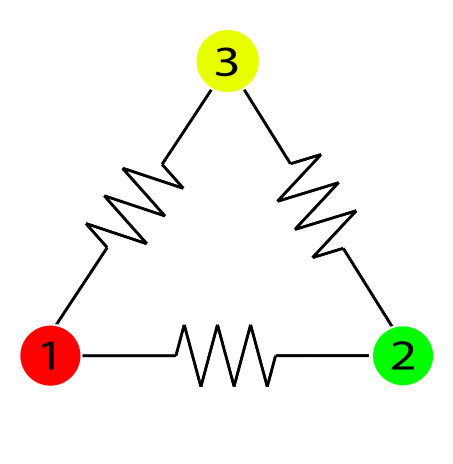}
\vspace{-2em}
\caption{$K_3$ Spring Network. We take typical values $k_{1,2} = k_{1,3} = k_{2,3} = 0.5$, $\tilde{\ell}_{1,2} = \tilde{\ell}_{1,3} = 1$, and $\tilde{\ell}_{2,3} = 1.5$.}\label{fig:K3network}
\end{figure}
    
\begin{example}\label{ex:K3}
Consider an example system with the parameter values $k_{1,2} = k_{1,3} = k_{2,3} = 0.5$, $\tilde{\ell}_{1,2} = \tilde{\ell}_{1,3} = 1$, and $\tilde{\ell}_{2,3} = 1.5$. Solving the system given by inverse spring lengths using homotopy continuation gives 12 real, non-singular solutions. The initial computation takes approximately 7.34s. 

 We graph each of the twelve solutions, labeled by their potential energy (PE) in \Cref{fig:K3sols}, from which we see that several solutions are identical up to symmetry. The variables values for the 12 real nonsingular solutions rounded to 3 decimal places are given in the matrix  below.
\[ 
\left[ \begin{smallmatrix}
    x_2 & x_3 & y_3 & I_{1,2} & I_{1,3} & I_{2,3} \\
    \hline\\
  1.0& -0.125& 0.992& 1.0& 1.0& 0.667 \\
  1.0& -0.125& -0.992& 1.0& 1.0& 0.667 \\
  -1.0& 0.125& -0.992& 1.0& 1.0& 0.667 \\
  -1.0& 0.125& 0.992& 1.0& 1.0& 0.667 \\
  -0.833& 0.833& 0.0& 1.2& 1.2& 0.6 \\
  0.833& -0.833& 0.0& 1.2& 1.2& 0.6 \\
  1.5& 0.5& 0.0& 0.667& 2.0& 1.0 \\
  -0.5 & -1.5 & 0.0& 2.0& 0.667& 1.0\\
   0.5& 1.5& 0.0& 2.0& 0.667& 1.0 \\
  -1.5& -0.5& 0.0& 0.667& 2.0& 1.0 \\
  0.167& -0.167& 0.0& -6.0& -6.0& 3.0 \\
  -0.167& 0.167& 0.0& -6.0& -6.0& 3.0 
  \vrule width 0pt depth 2pt
\end{smallmatrix} \right]
\]

\begin{figure}[ht] \centering \label{fig:K3sols}
\includegraphics[scale=.62]{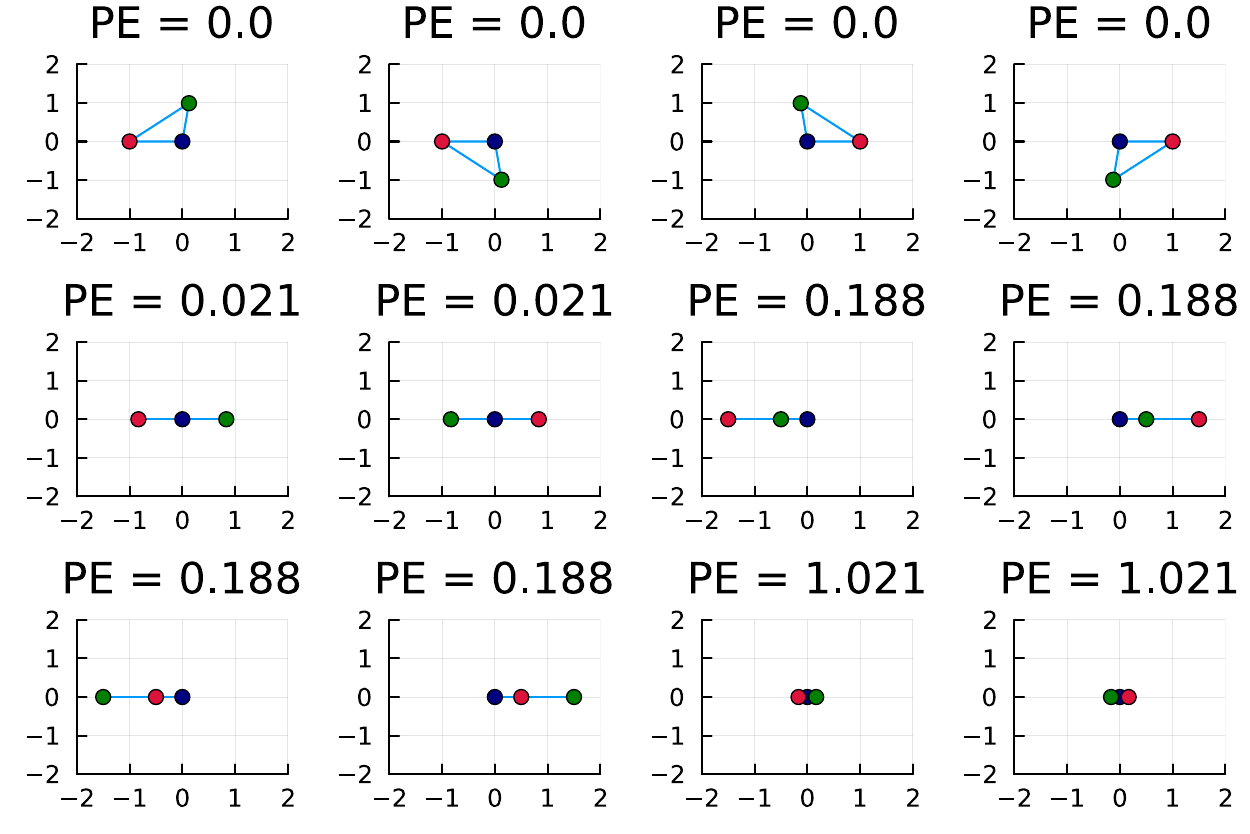}
\caption{Plot of the Real, Nonsingular Solutions for the K3 Spring Network Described in \Cref{ex:K3}.}\label{fig:K3solutions}
\end{figure}

Solving the system given by normal spring lengths also gives 12 real, non-singular solutions. These solutions are identical to the solutions given by the previous method. However, it also gives 19 additional singular solutions, 11 of which are real. The initial computation takes approximately 7.74s. 

The variable values for the singular solutions rounded to 3 decimal places are given in the matrix below. The real solutions are given on the left matrix, and the complex solutions are given in the right matrix:
\[
\left[
\begin{smallmatrix} \vrule width 0pt depth 4pt
  x_2 & x_3 & y_3 & \ell_{1,2} & \ell_{1,3} & \ell_{2,3} \\
  \hline \\
  0.0& 1.25& 0.0& 0.0& 1.25& 1.25 \\
  0.0& 0.0& 0.0& 0.0& 0.0& 0.0 \\
  0.25& 0.0& 0.0& -0.25& 0.0& 0.25 \\
  0.25& 0.0& 0.0& -0.25& 0.0& 0.25 \\
  1.25& 0.0& 0.0& 1.25& 0.0& 1.25 \\
  0.0& -1.25& 0.0& 0.0& 1.25& 1.25 \\
  1.0& 1.0& 0.0& 1.0& 1.0& 0.0 \\
  0.0& 0.25& 0.0& 0.0& -0.25& 0.25 \\
  0.0& -0.25& 0.0& 0.0& -0.25& 0.25 \\
  -1.25& 0.0& 0.0& 1.25& 0.0& 1.25 \\
 - 1.0& -1.0& 0.0& 1.0& 1.0& 0.0 
  \vrule width 0pt depth 2pt
\end{smallmatrix}\right], \quad 
      \left[\begin{smallmatrix}
  x_2 & x_3 & y_3 & \ell_{1,2} & \ell_{1,3} & \ell_{2,3}\\
\hline \\
0.0 & 0.506 - 3.376 i & 3.385 + 0.505 i & 0.0& -0.25 & 0.25 \\
0.0 & 0.016 + 0.218 i & 1.269 - 0.003 i & 0.0 & 1.25& 1.25\\
0.0 & 0.135 - 0.047 i & 0.218 + 0.029 i & 0.0 & -0.25 & 0.25 \\
0.0 & -0.652 - 0.204 i & -1.093 + 0.122 i & 0.0 & 1.25 & 1.25 \\
0.0 & 1.049 + 0.046 i & 0.685 - 0.070 i & 0.0 & 1.25& 1.25 \\
0.0 & -7.746 + 4.652 i & 4.654 + 7.743 i & 0.0 & -0.25 & 0.245 \\
0.0 & -0.175 - 0.299 i & -1.274 + 0.041 i & 0.0 & 1.25 & 1.25 \\
0.0 & -0.123 + 0.051 i & -0.225 - 0.02 i & 0.0 & -0.25 & 0.25 \\
      \end{smallmatrix}\right].\qedhere
      \]
\end{example}

\subsection{4-Node Spring Networks}
In this section we list as figures the 4-node spring networks we study and report on in \Cref{tab:NetworkSummary2D}. In the captions we list the typical parameter values we choose for our calculations.  

\begin{figure}[ht] \centering 
\includegraphics[scale=.35]{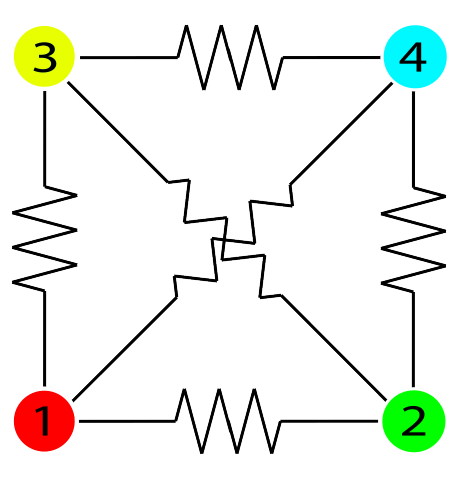}
\quad
\includegraphics[scale=.5]{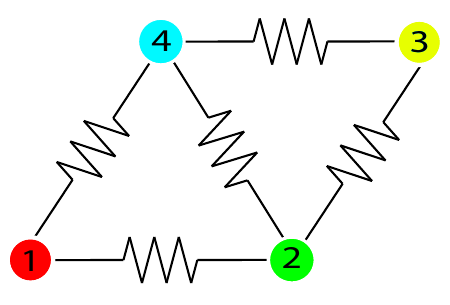 }
\caption{Spring Networks $K_4$ (left) and $K_4 - e$ (right). We take typical parameter values $k_{i,j} = 0.5$ for both, and $\tilde{\ell}_{1,2} = \tilde{\ell}_{2,3} = \tilde{\ell}_{3,4} = \tilde{\ell}_{1,4} = 1$, $\tilde{\ell}_{1,3} = \tilde{\ell}_{2,4} = \sqrt{2}$ for $K_4$ and $\tilde{\ell}_{1,2} = \tilde{\ell}_{2,3} = \tilde{\ell}_{3,4} = \tilde{\ell}_{1,4} = \tilde{\ell}_{2,4} = 1$ for $K_4-e$.}\label{fig:K4network}\label{fig:K4-enetwork}
\end{figure}

Note that for 4 nodes we have listed the only 2-d planar rigid graphs. When moving to 3-d, the $K_4$ network can still be rigid, and (as long as the lengths satisfy all relevant triangle inequalities) produces solutions that are a tetrahedron with all edges at rest, see \Cref{fig:K4-3Dnetwork}. The planar solutions are then singular solutions when the system is viewed in 3-d.  The $K_4-e$ spring network is not 3-d rigid, on the other hand.

\subsection{5-Node Spring Networks}
Here we report on solutions to 5-node spring networks in 2- and 3- dimensions, and because the systems become more challenging to solve as the network grows we discuss strategies for solving the larger networks among these examples.

\begin{figure}[ht] \centering 
\includegraphics[scale=.5]{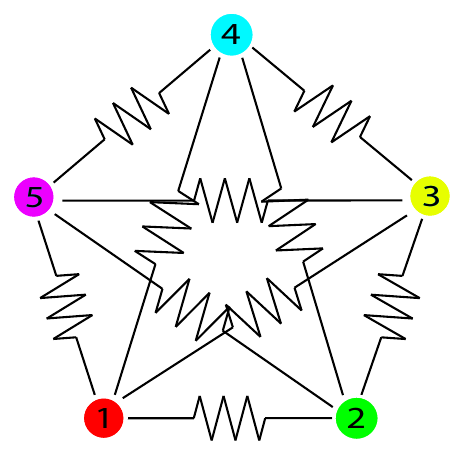}
\qquad
\includegraphics[scale=.5]{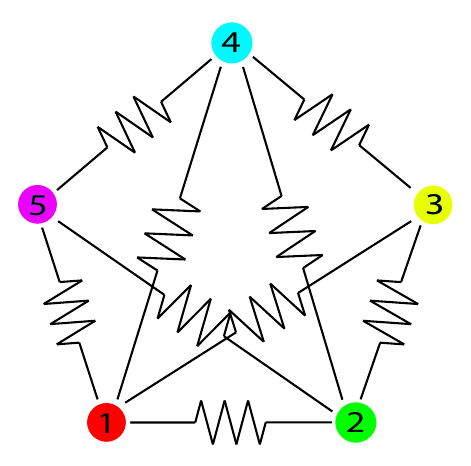 }
\caption{Spring Networks $K_5$ (left)  and $K_5 - e$ (right). We take typical the parameter values $k_{1,2} = k_{2,3} = k_{3,4} = k_{4,5} = k_{1,5} =  1$, $k_{1,3} = k_{1,4} = k_{2,4} = k_{2,5} = k_{3,5} = 0.5$, $\tilde{\ell}_{1,2} = \tilde{\ell}_{2,3} = \tilde{\ell}_{3,4} = \tilde{\ell}_{4,5} = \tilde{\ell}_{1,5} = 1$, and $\tilde{\ell}_{1,3} = \tilde{\ell}_{1,4} = \tilde{\ell}_{2,4} = \tilde{\ell}_{2,5} = \tilde{\ell}_{3,5} = \frac{1 + \sqrt{5}}{2}$ for $K_5$ and similarly for $K_5-e$ except that $k_{3,5}$ and $\tilde\ell_{3,5}$ are missing in that case.}\label{fig:K5network}
\end{figure}

\begin{figure}[ht] \centering 
\includegraphics[scale=.4]{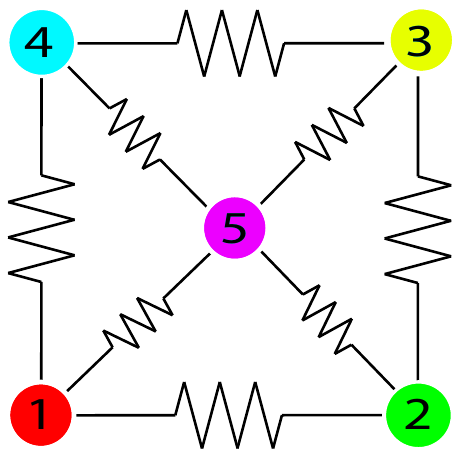}
\qquad\includegraphics[scale=.5]{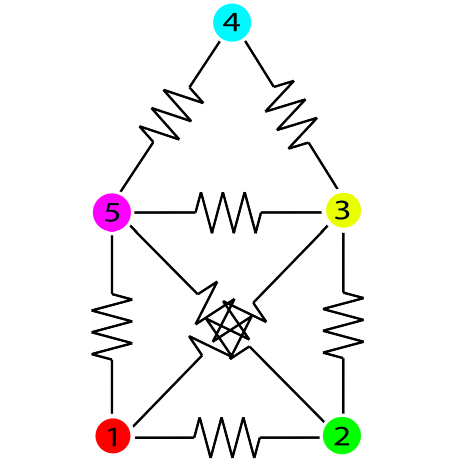 }
\caption{Spring Networks $K_5 - e - e$ (left) and $K_5 - P_3$ (right). For $K_5 - e - e$ we take typical parameter values $k_{1,2} = k_{2,3} = k_{3,4} = k_{1,4} = 1$, $k_{1,5} = k_{2,5} = k_{3,5} = k_{4,5} = 0.5$, $\tilde{\ell}_{1,2} = \tilde{\ell}_{2,3} = \tilde{\ell}_{3,4} = \tilde{\ell}_{1,4} = 2$, and $\tilde{\ell}_{1,5} = \tilde{\ell}_{2,5} = \tilde{\ell}_{3,5} = \tilde{\ell}_{4,5} = \sqrt{2}$.
For $K_5 - P_3$ we take typical parameter values $k_{1,2} = k_{2,3} = k_{3,5} = k_{1,5} = 1$, $k_{1,3} = k_{2,5} = 0.5$, $k_{3,4} = k_{4,5} = 1.5$ $\tilde{\ell}_{1,2} = \tilde{\ell}_{2,3} = \tilde{\ell}_{3,5} = \tilde{\ell}_{1,5} = 1$, $\tilde{\ell}_{1,3} = \tilde{\ell}_{2,5} = \sqrt{2}$, $\tilde{\ell}_{3,4} = \tilde{\ell}_{4,5} = 2$.}
\label{fig:K5-e-enetwork}
\end{figure}

\begin{figure}[ht] \centering 
\includegraphics[scale=.5]{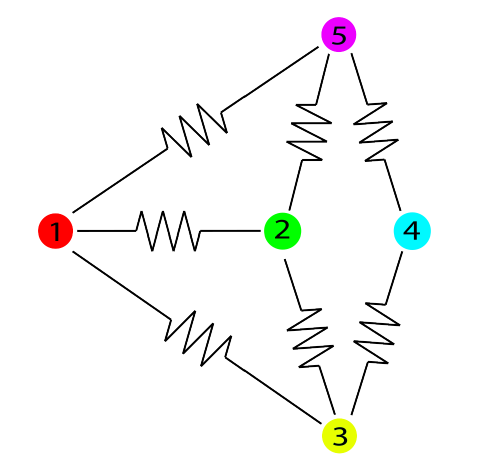}
 \includegraphics[scale=.5]{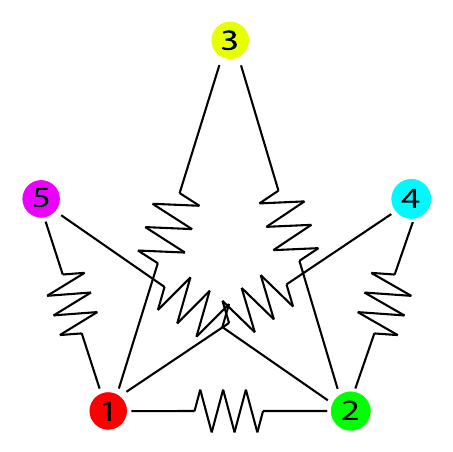 }
\includegraphics[scale=.5]{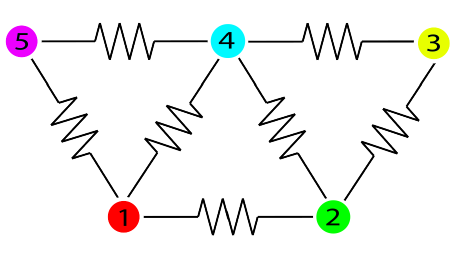 }
\caption{Spring Networks $K_5 - (P_3 \cup e)$ (left), $K_5 - C_3$ (center), and $K_5 - P_4$ (right). For $K_5 - (P_3 \cup e)$ we take typical parameter values $k_{1,2} = 1$, $k_{2,3} = k_{3,4} = k_{4,5} = k_{2,5} = 1.5$, $k_{1,3} = k_{1,5} = 0.5$, $\tilde{l}_{1,2} = 1$, $\tilde{l}_{2,3} = \tilde{l}_{3,4} = \tilde{l}_{4,5} = \tilde{l}_{2,5} = 0.5$, $\tilde{l}_{1,3} = \tilde{l}_{1,5} = 1.5$.
For $K_5 - C_3$ we take typical parameter values $k_{1,2} = k_{1,5} = k_{2,5} = 1$, $k_{1,3} = k_{2,3} = k_{1,4} = k_{2,4} = 0.5$, $\tilde{\ell}_{1,2} = \tilde{\ell}_{1,5} = \tilde{\ell}_{2,5} = 1$, $\tilde{\ell}_{2,3} = \tilde{\ell}_{1,4} = 0.5$, and $\tilde{\ell}_{1,3} = \tilde{\ell}_{2,4} = 1.2$. For $K_5 - P_4$ we take typical parameter values $k_{i,j}= 0.5$, $\tilde \ell_{i,j}= 1$.
}\label{fig:K5-P3-enetwork}
\end{figure}

\subsection{Solving 3-Dimensional Linear Spring Networks}\label{sec:3d}

A $3$-dimensional Linear Spring Network is comprised a collection of points in $\mathbb{R}^3$. These points are connected by a network of linear springs with known resting lengths and spring constants.

\Cref{tab:NetworkSummary3D} gives a summary of the results of our computations for typical parameter values. The figures that follow refer to this table and the typical parameter values we chose are listed in their captions. 
\begin{table}[ht]
    \centering

    \begin{tabular}{|c|c|c|c|c|c|c|c|c|}
        \hline
        Graph & \#Paths & \begin{tabular}{c}\#Solutions  \\ Complex (Real) \end{tabular} & \begin{tabular}{c}\#Solutions \\ Non-singular  \\ Complex (Real) \end{tabular} & \begin{tabular}{c}Wall \\ Time \end{tabular} \\
        \hline 
        $K_4$ & 5\,504 & 248 (128)  & 88 (88) & 0:00:27.7 \\
        $K_5$ & 7\,993\,344 & 439\,076 (8\,728) & 438\,553 (8\,728) & 4:25:44 \\
        $K_5-e$ & 1\,544\,704 & 57\,119 (5\,412) & 56\,169 (5\,172) & 0:34:36 \\
        \hline
    \end{tabular}

    \caption{Computational Results for Small 3D Spring Networks (using inverse-length formulations).}\label{tab:NetworkSummary3D}
\end{table}
 
\begin{figure}[ht] \centering 
\includegraphics[scale=.6]{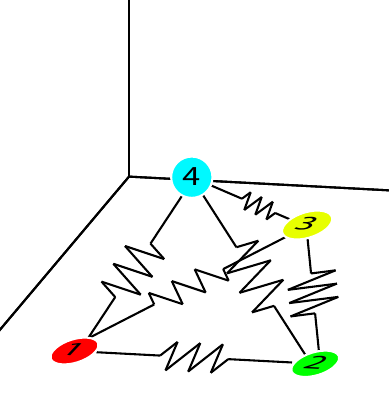}
 \includegraphics[scale=.6]{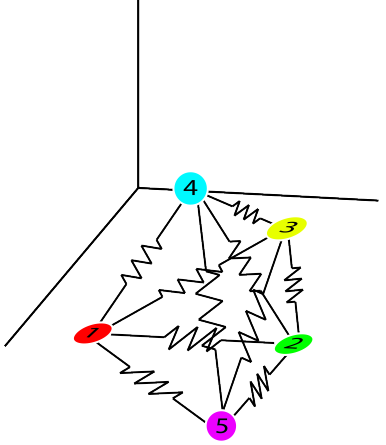}  
 \includegraphics[scale=.6]{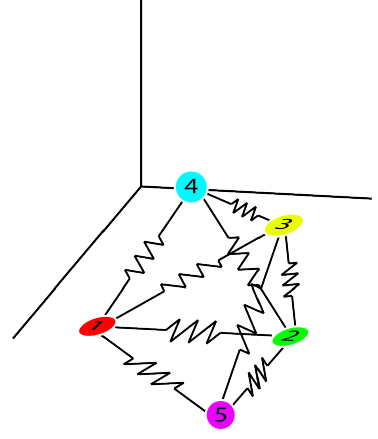}
\caption{$3$-Dimensional Spring Networks $K_4$ (left) and $K_5$ (center) and $K_5-e$ (right). For $K_4$ we take typical parameter values $k_{i,j} = 0.5$, $\tilde{\ell}_{1,2} = \tilde{\ell}_{2,3} = \tilde{\ell}_{3,4} = \tilde{\ell}_{1,4} =\tilde{\ell}_{1,3} = \tilde{\ell}_{2,4} = 1$.
For $K_5$ we take typical parameter values $k_{i,j} = 1$, $\tilde{\ell}_{1,2} = \tilde{\ell}_{2,3} = \tilde{\ell}_{3,4}  = \tilde{\ell}_{1,5} = 1 = \tilde{\ell}_{1,3} = \tilde{\ell}_{1,4} = \tilde{\ell}_{2,4} = \tilde{\ell}_{2,5} = \tilde{\ell}_{3,5} =2 $ and $\tilde{\ell}_{4,5} = \frac{4\sqrt{6}}{3}$.
For $K_5-e$ we take typical parameter values $k_{i,j} = 1$, $\tilde{\ell}_{i,j}  = 2$.
}\label{fig:K4-3Dnetwork}
\end{figure}

\newpage

\subsection{Strategies for Solving Larger Spring Networks}\label{sec:big}
In this section we discuss strategies for solving larger systems. Since the number of rigid $2$-D graphs grows quite quickly, we only demonstrate our methods on a few examples.

An issue that arises when trying to use homotopy continuation to solve larger systems of polynomial networks is that this requires constructing an increasingly large start system, and the number of paths grows like the product of the degrees of the polynomials. The hc.jl library incorporates polyhedral methods to recognize structure in the system of equations, but these polyhedral start systems can also grow quickly with the problem size. Moreover, we noticed (see \Cref{tab:NetworkSummary2D,tab:NetworkSummary3D}) that the number of solutions is typically much smaller than the number of paths tracked, sometimes as low as 1-2\%. We adopt a strategy similar to the cheater homotopy strategy \cite{li1989cheater} to allow us to solve systems of equations whose start systems would be too large to attempt. Here is an outline of our method:
\begin{enumerate}
\item Find some initial solutions via one of the following methods
    \begin{enumerate}
        \item Using basic geometry and knowledge of the problem.
        \item Newton's method with (a large number of) random starting points.
        \item Starting a (hopelessly long) normal homotopy continuation solve but then stopping it early and saving the partial list of solutions.
    \end{enumerate}
\item Use the partial list of solutions as starting solutions for monodromy solving.
Note that for large systems, like $K_5$ standard solving can miss a lot of solutions. One run of that computation found 53\,228 non-singular solutions (4\,175 real), then applying monodromy to that solution set to find more solutions we found 53\,713 solutions before a heuristic stop was initiated after 5 loops were performed with no additional roots found, and apparently 4\,193 of them were real. 

\end{enumerate}

\section{Conclusion}

We evaluated a range of strategies for solving spring networks and compared them both qualitatively and quantitatively. Given a spring network we constructed two polynomial systems which represent it, the standard system and the inverse-length system. For both practical computations, and asymptotically as the number of nodes and edges in the spring system increases, the inverse-length system is more computationally efficient. 
Comparing the number of paths required for homotopy solving $K5-P4$ for the standard formulation versus the inverse-length formulation (\Cref{tab:NetworkSummary2D}) we see that it took 1.2M versus 30k paths, which is a 40x ratio improvement for inverse-lengths. We observe a similar ratio improvement in computational time. We similarly observe orders of magnitude improvement (in the 20x to 30x range) for the inverse-length formulation for the rest of the 2d graphs.

We gave bounds on the Bézout number for the standard formulation and the inverse-length formulations in \Cref{thm:Bézout}. 
Furthermore we demonstrated that there is an optimal choice of base points when constructing the polynomial system, namely choosing the base points to have highest degree, \Cref{thm:bound}. Even though the Bézout number for the inverse-length formulation is independent of base point, we see that the best choice of base points for the standard formulation is also best for the polyhedral bound for the inverse-length formulation. In fact, \Cref{tab:PolyhedralSummary2D} shows that there can be as much as a 2-fold difference in the number of paths needed to perform homotopy solving for the optimal choice of base points when compared to the least optimal choice.

We further evaluated the efficacy of a variety of methods for solving these spring systems. For small spring networks, using Newton's method or a combination of Newton's method and monodromy is optimal. For medium sized spring networks homotopy continuation is best. For large spring networks homotopy continuation is limited by numerical errors and memory constraints. Thus we have found, for large spring networks, it is best to perform a series of parameter homotopy continuations from a smaller spring network and perform monodromy after each homotopy to recover any missed solutions.

\section*{Funding acknowledgment}
This material is based on research sponsored by AFRL/RA under agreement number FA8651-25-2-0001. The U.S. Government is authorized to reproduce and distribute reprints for Governmental purposes notwithstanding any copyright notation thereon.

Distribution Statement A. Approved for public release: distribution is unlimited. Approved AFRL-2026-1489 31-08-2026.

JE and NW acknowledge support from the AFRL Scholars program.  LO acknowledges support from the SFFP program. LO and EC acknowledge support from the AFRL Innovative Research Fund.

\section*{Statement of author contributions}
All authors contributed to the writing of this work. 
LO and AR contributed to the problem conception and overall project management.  EC, LO, NW, JE contributed to the writing of versions of the code. EC contributed to the testing and reporting of the results. EC made all of the figures.

\clearpage
\appendix
\label{app1}

\newcommand{\arxiv}[1]{\href{http://arxiv.org/abs/#1}{{\tt arXiv:#1}}}

\bibliographystyle{elsarticle-harv}
\bibliography{main_bibfile}

\end{document}